\numberwithin{equation}{section}
\newtheorem{theorem}{Theorem}[section]
\newtheorem{corollary}[theorem]{Corollary}
\newtheorem{lemma}[theorem]{Lemma}
\theoremstyle{definition}
\newtheorem{definition}[theorem]{Definition}
\theoremstyle{remark}
\newtheorem{remark}[theorem]{Remark}
\numberwithin{equation}{section}
\DeclareMathOperator{\RE}{Re}
\DeclareMathOperator{\IM}{Im}
\begin{document}
	\title[On sharp third Hankel determinant for certain starlike functions ]{On sharp third Hankel determinant for certain starlike functions}
	\author[Neha Verma]{Neha Verma}
	\address{Department of Applied Mathematics, Delhi Technological University, Delhi--110042, India}
	\email{nehaverma1480@gmail.com}
	\author[S. Sivaprasad Kumar]{S. Sivaprasad Kumar}
	\address{Department of Applied Mathematics, Delhi Technological University, Delhi--110042, India}
	\email{spkumar@dce.ac.in}

	\subjclass[2010]{30C45, 30C50}
	
	\keywords{Starlike, Sharp, Hankel determinant, Order alpha }
	\maketitle
\begin{abstract}
In this paper, we provide an estimation for the sharp bound of the third Hankel determinant of starlike functions of order $\alpha$, where $\alpha$ ranges in the interval $[0, 1/6]\cup \{1/2\}$ and thereby extending the result of Rath et al. (Complex Anal Oper Theory: No. 65, 16(5), 8 pp 2022).

\end{abstract}
\maketitle
	
\section{Introduction}
	\label{intro}
Consider the set $\mathcal{A}$, which comprises normalized analytic functions defined on the open unit disk $\mathbb{D}:=\{z\in \mathbb{C}:\vert z \vert<1\}$. These functions are expressed in the form:
\begin{equation}
	f(z) = z+\sum_{n=2}^{\infty}a_nz^n.\label{form}
	\end{equation}
Let $\mathcal{S}\subset\mathcal{A}$, where $\mathcal{S}$ represents the class of univalent functions, and $\mathcal{P}$ denotes the collection of analytic functions defined on $\mathbb{D}$ with a positive real part, expressed as $p(z)=1+\sum_{n=1}^{\infty}p_n z^n$.
 Let $h$ and $g$ are two analytic functions, then we say $h$ is subordinated to $g$, denoted as $h\prec g$, provided there exist a Schwarz function $w$, adhering to two crucial conditions: $w(0)=0$ and $\vert w(z)\vert \leq \vert z\vert $, such that $h(z)=g(w(z))$.

In the year 1936, Robertson \cite{robertson} introduced the class of starlike functions of order $\alpha$, characterized as follows:
\begin{definition}\cite{robertson}
For $0\leq \alpha<1$, we say that a function $f\in \mathcal{A}$ is starlike of order $\alpha$ if and only if
\begin{equation*}
    \RE\bigg(\dfrac{zf'(z)}{f(z)}\bigg)>\alpha,\quad z\in \mathbb{D}.
\end{equation*}
The class of all such functions is represented by $\mathcal{S}^{*}(\alpha)$.
\end{definition}
In 1992, Ma and Minda \cite{ma-minda} introduced a more general class of starlike functions through subordination, defined as follows:
\begin{equation*}
		\mathcal{S}^{*}(\varphi)=\bigg\{f\in \mathcal {A}:\dfrac{zf'(z)}{f(z)}\prec \varphi(z) \bigg\},\label{mindaclass}
\end{equation*}
where $\varphi$ is an analytic univalent function such that $\RE\varphi(z)>0$, $\varphi(\mathbb{D})$ is symmetric about the real axis and starlike with respect to $\varphi(0)=1$ with $\varphi'(0)>0$. Through this concept, we can re-define the class $\mathcal{S}^{*}(\alpha)$ as:
\begin{equation*}
\mathcal{S}^{*}(\alpha)=\bigg\{f\in \mathcal {A}:\dfrac{zf'(z)}{f(z)}\prec \frac{1+(1-2\alpha)z}{1-z}, \quad\alpha\in [0,1) \bigg\}.
\end{equation*}
Note that $\mathcal{S}^{*}(0)=\mathcal{S}^{*}$ and $\mathcal{S}^{*}(\varphi)\subset\mathcal{S}^{*}(\alpha)$ for some $\alpha$ depending upon the choice of $\varphi$.

The Bieberbach conjecture, as documented on \cite[Page no. 17]{goodman vol1}, has been a significant source of inspiration in the development of univalent function theory and in the formulation of coefficient problems. Building on this foundation, in 1966, Pommerenke \cite{pomi} introduced the concept of $qth$ Hankel determinants, denoted as $H_{q}(n)$, where $n$ and $q$ are both natural numbers, associated with analytic functions as in \eqref{form}, defined as follows:
\begin{equation}
		H_{q}(n) =\begin{vmatrix}
			a_n&a_{n+1}& \ldots &a_{n+q-1}\\
			a_{n+1}&a_{n+2}&\ldots &a_{n+q}\\
			\vdots& \vdots &\ddots &\vdots\\
			a_{n+q-1}&a_{n+q}&\ldots &a_{n+2q-2}\label{5hqn}
		\end{vmatrix}.
\end{equation}
By choosing specific values for both $n$ and $q$, we can examine particular cases of this concept. For instance, when we set $q=2$, we obtain the expression for the second-order Hankel determinant. Numerous studies have investigated and established sharp bounds for second-order Hankel determinants and other determinants within various subclasses of $\mathcal{S}$, see \cite{krishna bezilevic,secondhankelalpha,2 Janteng} for more details. Now, if we choose $q=3$ and $n=1$ in (\ref{5hqn}), assuming $a_1:=1$, we arrive at the expression for the Hankel determinant of order three, given by
\begin{equation}
H_{3}(1):=\begin{vmatrix}
$$1&a_2&a_3\\
a_2&a_3&a_4\\
a_3&a_4&a_5$$
\end{vmatrix}=a_3(a_2a_4-a_3^2)-a_4(a_4-a_2a_3)+a_5(a_3-a_2^2).\label{1h3}
\end{equation}

Determining the third-order Hankel determinant poses a greater challenge compared to the second-order, as evidenced in \cite{zap, kumar-ganganiaCardioid-2021}. We also list some of the sharp estimates for the third-order Hankel determinant concerning functions within the class $\mathcal{S}^{*}(\varphi)$, considering various selections of $\varphi(z)$ in Table \ref{6 table1}. However, the sharp estimate of $H_{3}(1)$ for $\mathcal{S}_{Ne}^{*}$ is yet to be estimated.

\begin{table}\label{6 table1}
\caption{\centering List of sharp third order Hankel determinants}
\centering
\begin{tabular}{|c| c |c|}
 \hline
 Class& Sharp bound & Reference \\ [1ex]
 \hline
  $\mathcal{S}^{*}:=\mathcal{S}^{*}(0)$  & 4/9&\cite{banga,4/9}    \\
 $\mathcal{S}^{*}(1/2)$&   1/9 &\cite{lecko 1/2 bound,rath}   \\
 $\mathcal{S}^{*}_{\varrho}:=\mathcal{S}^{*}(1+ze^z)$ & 1/9 &\cite{nehacardioid}\\
  $\mathcal{SL}^{*}:=\mathcal{S}^{*}(\sqrt{1+z})$ & 1/36 &\cite{sharp}\\
  $\mathcal{S}^{*}_{e}:=\mathcal{S}^{*}(e^z)$ & 1/9&\cite{nehaexpo} \\
  $\mathcal{S}^{*}_{\rho}:=\mathcal{S}^{*}(1+sinh^{-1}(z))$ & 1/9&\cite{nehapetal} \\
  $\mathcal{S}_{Ne}^{*}:=\mathcal{S}^{*}(1+z-z^3/3)$ & ---&---\\ [1ex]
 \hline
\end{tabular}
\label{table1}
\end{table}
For the class $\mathcal{S}^{*}(\alpha)$, Krishna and Ramreddy \cite{secondhankelalpha} computed the bound of the second order Hankel determinant, $\vert a_2a_4-a_3^2\vert \leq (1-\alpha)^2$, $\alpha\in[0,1/2]$ while Xu and Fang  \cite{feketealpha} calculated the sharp bounds of the Fekete and Szeg\"{o} functional $\vert a_3-\lambda a_2^2\vert \leq (1-\alpha)\max\{1,\vert 3-2\alpha-4\lambda(1-\alpha)\vert \}$, $\lambda\in \mathbb{C}$ and $\alpha\in[0,1)$. We refer \cite{moreofalpha} for further information on Hankel determinants associated with the class $\mathcal{S}^{*}(\alpha)$.

The purpose of this study is to establish the sharp bound of third order Hankel determinant for functions belonging to the class, $\mathcal{S}^{*}(\alpha)$. At the end of this paper, we demonstrate the validation of our main result by considering the class $\mathcal{S}^{*}(\alpha)$ specifically for the case when $\alpha=0$, and we also present some relevant applications.

\subsection{Preliminary}
In this part of the section, we mention the initial coefficient bounds $a_i$ $(i=2,3,4,5)$ in terms of the Carath\'{e}odory coefficients and a lemma which will be used in our forthcoming results.
Let $f\in \mathcal{S}^{*}(\alpha)$, then a Schwarz function $w(z)$ exists such that
\begin{equation}\label{3 formulaai}
	\dfrac{zf'(z)}{f(z)}=\frac{1+(1-2\alpha)w(z)}{1-w(z)}.
\end{equation}
Let $p(z)=1+\sum_{n=2}^{\infty}p_nz^n \in \mathcal{P}$ and $w(z)=(p(z)-1)/(p(z)+1)$. The expressions of $a_i (i=2,3,4,5)$ are obtained in terms of $p_j (j=1,2,3,4)$ by substituting $w(z)$, $p(z)$, and $f(z)$ in equation (\ref{3 formulaai}) with suitable comparison of coefficients so that
\begin{equation}\label{6 alphaa2}
a_2=p_1(1-\alpha),
\end{equation}
\begin{equation}\label{6 alphaa3}
a_3=\dfrac{(1-\alpha)}{2}\bigg(p_2+p_1^2(1-\alpha)\bigg),
\end{equation}
\begin{equation}\label{6 alphaa4}
    a_4=\dfrac{(1-\alpha)}{6}\bigg(2p_3+3p_1p_2(1-\alpha)+p_1^3(1-\alpha)^2\bigg)
\end{equation}
and
\begin{equation}\label{6 alphaa5}
a_5=\dfrac{(1-\alpha)}{24}\bigg(6p_4+(1-\alpha)\bigg(3p_2^2+8p_1p_3\bigg)+(1-\alpha)^2\bigg(6p_1^2p_2+p_1^4(1-\alpha)\bigg)\bigg).
\end{equation}
	
The formula for $p_j$ $(j=2,3,4)$, which plays a significant role in finding the sharp bound of the Hankel determinant and has been prominently exploited in the main theorem, is contained in the Lemma \ref{pformula} below. 
\begin{lemma}\cite{rj,lemma1}\label{pformula}
Let $p\in \mathcal {P}$ has the form $1+\sum_{n=1}^{\infty}p_n z^n.$ Then
\begin{equation*}
2p_2=p_1^2+\gamma (4-p_1^2),
\end{equation*}
\begin{equation*}
4p_3=p_1^3+2p_1(4-p_1^2)\gamma -p_1(4-p_1^2) {\gamma}^2+2(4-p_1^2)(1-\vert \gamma\vert ^2)\eta,
\end{equation*}
and \begin{align*}
8p_4&=p_1^4+(4-p_1^2)\gamma (p_1^2({\gamma}^2-3\gamma+3)+4\gamma)-4(4-p_1^2)(1-\vert \gamma\vert ^2)(p_1(\gamma-1)\eta\\
&\quad+\bar{\gamma}{\eta}^2-(1-\vert \eta\vert ^2)\rho),
\end{align*}
for some $\gamma$, $\eta$ and $\rho$ such that $\vert \gamma\vert \leq 1$, $\vert \eta\vert \leq 1$ and $\vert \rho\vert \leq 1.$
\end{lemma}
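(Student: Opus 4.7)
The plan is to derive these parametric formulas via the classical bijection between $\mathcal{P}$ and the Schur class, combined with iterated application of the Schwarz lemma (the so-called Schur algorithm).

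First I would exploit the correspondence $w(z) = (p(z)-1)/(p(z)+1)$, which is a bijection between $p \in \mathcal{P}$ and analytic self-maps $w$ of $\mathbb{D}$ with $w(0) = 0$. Since $w(0) = 0$, the Schwarz lemma gives $w(z) = z\,\omega_1(z)$ with $\omega_1$ a Schur function (that is, $|\omega_1(z)| \le 1$ on $\mathbb{D}$). Inverting the relation produces the geometric series
\[
p(z) = 1 + 2w(z) + 2w(z)^2 + 2w(z)^3 + \cdots ,
\]
and substituting $w = z\omega_1$ expresses each $p_n$ as a polynomial in the Taylor coefficients of $\omega_1$; in particular $p_1 = 2\omega_1(0)$.

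Next I would iterate the Schur reduction: for any Schur function $\omega$ with $\omega(0) = c$,
\[
\omega(z) = \frac{c + z\,\Omega(z)}{1 + \bar{c}\, z\,\Omega(z)},
\]
where $\Omega$ is again Schur. Applied to $\omega_1$, then to its reduct $\omega_2$, and once more to $\omega_3$, this introduces three free parameters $\gamma := \omega_2(0)$, $\eta := \omega_3(0)$, $\rho := \omega_4(0)$, each automatically satisfying a modulus bound of $1$ by the Schwarz lemma. Substituting these back into the series for $p$ and reading off the coefficients of $z^2$, $z^3$, and $z^4$ yields the three identities of the lemma. The distinctive factors $(4-p_1^2)$, $(1-|\gamma|^2)$, and $(1-|\eta|^2)$ originate as the normalising Jacobians of the successive Möbius reductions evaluated at the origin.

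The principal obstacle is the algebraic bookkeeping for the $z^4$ coefficient: each Schur reduction introduces a denominator that must be expanded as a geometric series and truncated modulo $z^5$, and these expansions must be composed three deep. The resulting mixed polynomial in $\gamma$, $\bar{\gamma}$, $\eta$, and $\rho$ collapses, after careful collection, into the compact form displayed for $8p_4$; the appearance of $\bar{\gamma}\eta^2$ and $p_1(\gamma-1)\eta$ is a hallmark of this composition. The step is purely mechanical but tedious, and verification that the modulus bounds are respected at every stage reduces to the Schwarz lemma applied at each iteration of the algorithm.
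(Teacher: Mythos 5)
Your proposal is sound, but note first that the paper contains no proof of this lemma to compare against: it is imported verbatim, the $p_2$ and $p_3$ formulas from Libera and Z\l otkiewicz \cite{rj} and the $p_4$ formula from Kwon, Lecko and Sim \cite{lemma1}. Those sources obtain the parametrization from the classical Carath\'eodory--Toeplitz description of the coefficient body of $\mathcal{P}$, in which, given $p_1,\dots,p_{n-1}$, the admissible values of $p_n$ fill a disk whose centre and radius are expressed through Toeplitz determinants. Your route generates the same nested disks by iterating the Schwarz lemma through the Schur algorithm, and the two are equivalent: the Schur parameters $\gamma=\omega_2(0)$, $\eta=\omega_3(0)$, $\rho=\omega_4(0)$ are exactly the normalized positions inside those disks. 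Your approach is the more elementary and self-contained of the two, and the bookkeeping you deferred does close. Writing $w=z\omega_1$, $\zeta_k:=\omega_k(0)$, and $w(z)=c_1z+c_2z^2+\cdots$, the three reductions give
\begin{equation*}
c_2=(1-|\zeta_1|^2)\zeta_2,\qquad
c_3=(1-|\zeta_1|^2)\big[(1-|\zeta_2|^2)\zeta_3-\bar{\zeta}_1\zeta_2^2\big],
\end{equation*}
\begin{equation*}
c_4=(1-|\zeta_1|^2)\big[(1-|\zeta_2|^2)(1-|\zeta_3|^2)\zeta_4-\bar{\zeta}_2(1-|\zeta_2|^2)\zeta_3^2-2\bar{\zeta}_1\zeta_2(1-|\zeta_2|^2)\zeta_3+\bar{\zeta}_1^2\zeta_2^3\big],
\end{equation*}
and substituting these into $p_1=2c_1$, $p_2=2(c_2+c_1^2)$, $p_3=2(c_3+2c_1c_2+c_1^3)$, $p_4=2(c_4+2c_1c_3+c_2^2+3c_1^2c_2+c_1^4)$ (from $p=1+2w+2w^2+\cdots$) collapses precisely to the three displayed identities with $\gamma=\zeta_2$, $\eta=\zeta_3$, $\rho=\zeta_4$.

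Two details should be made explicit in a complete write-up. First, the identities as printed, with $p_1^2$ and $4-p_1^2$ rather than $|p_1|^2$ and $\bar{p}_1$, are valid only for $p_1$ real in $[0,2]$: your derivation produces $\bar{\zeta}_1=\bar{p}_1/2$ in the cross terms of $c_3$ and $c_4$, so the rotation-invariance of $\mathcal{P}$ must be invoked to normalize $p_1\in[0,2]$ before the formulas take the stated shape (the paper does exactly this at the start of the proof of Theorem \ref{6 sharph31}, which is why the lemma is usable there). Second, the Schur reduction $\Omega(z)=(\omega(z)-c)/(z(1-\bar{c}\,\omega(z)))$ exists only when $|c|<1$; if some $|\zeta_k|=1$, then $\omega_k$ is a unimodular constant by the maximum principle and the algorithm terminates, but the factor $1-|\zeta_k|^2$ annihilates all subsequent terms, so the identities still hold with the remaining parameters chosen arbitrarily. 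Since the lemma explicitly allows $|\gamma|=1$ and $|\eta|=1$, this degenerate case needs a sentence; with these two points added, your argument is a complete proof.
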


\section{Sharp $H_{3}(1)$ for $\mathcal{S}^{*}(\alpha)$}	
\noindent Recently, Kowalczyk et al. \cite{4/9} and Banga and Kumar \cite{banga} obtained the  sharp bound of the third-order Hankel determinant for functions in the class $\mathcal{S}^{*}:=\mathcal{S}^{*}(0)$, independently whereas Rath et al. \cite{rath} determined the sharp bound of $H_3(1)$ for functions in the class $\mathcal{S}^{*}(1/2)$ and corrected the proof provided in \cite{lecko 1/2 bound}. In this section, we extend our analysis to calculate the sharp bound of $H_{3}(1)$ for functions in the class $\mathcal{S}^{*}(\alpha)$ for some additional range of $\alpha$. Below, is our main result.

\begin{theorem}\label{6 sharph31}
Let $f\in \mathcal {S}^{*}(\alpha)$. Then
\begin{equation}
\vert H_{3}(1)\vert \leq \frac{4(1-\alpha)^2}{9}, \quad \alpha\in[0,1/6]\cup \{1/2\}.\label{6 9.5}
\end{equation}
This result is sharp.
\end{theorem}

\begin{proof}
Since, the class $\mathcal{P}$ is invariant under rotation, we have $p_1\in [0,2]$ and assume $p_1=:p$. The expressions of $a_ i$ $(i=2,3,4,5)$ from equations \eqref{6 alphaa2}-\eqref{6 alphaa5} are substituted in equation (\ref{1h3}). We get
\begin{align*}
H_{3}(1)&=\dfrac{(1-\alpha)^2}{144}\bigg(-(1-\alpha)^4p^6 + 3(1-\alpha)^3 p^4 p_2 +8(1-\alpha)^2 p^3 p_3+24(1-\alpha)p p_2 p_3\\
&\quad \quad\quad \quad \quad-18(1-\alpha)p^2p_4-9(1-\alpha)p_2^3-9(1-\alpha)^2p^2p_2^2 -16 p_3^2+18 p_2 p_4\bigg).
\end{align*}
After simplifying the calculations through Lemma \ref{pformula}, we obtain
\begin{equation*}					
H_{3}(1)=\dfrac{1}{1152}\bigg(\Delta_1(p,\gamma)+\Delta_2(p,\gamma)\eta+\Delta_3(p,\gamma){\eta}^2+\phi(p,\gamma,\eta)\rho\bigg),\quad \text{for}\quad\gamma,\eta,\rho\in \mathbb {D}.
\end{equation*}
Here
\begin{align*}
\Delta_1(p,\gamma):&=(1-\alpha)^2\bigg(\alpha(1-2\alpha)^2(3-2\alpha)p^6-(2-15\alpha+18\alpha^2)p^2{\gamma}^2(4-p^2)^2\\
&\quad+p^2{\gamma}^4(4-p^2)^2-(10-15\alpha)p^2{\gamma}^3(4-p^2)^2+36\alpha{\gamma}^3(4-p^2)^2\\
&\quad+(3-12\alpha^3+32\alpha^2-19\alpha)p^4{\gamma}(4-p^2)-9(1-2\alpha)p^4{\gamma}^3(4-p^2)\\
&\quad+(3-16\alpha^2+2\alpha)p^4{\gamma}^2(4-p^2)-36(1-2\alpha)p^2{\gamma}^2(4-p^2)\bigg),\\
\Delta_2(p,\gamma):&=4(1-\vert \gamma\vert ^2)(4-p^2)(1-\alpha)^2\bigg((8\alpha^2-10\alpha+3)p^3+9(1-2\alpha)p^3{\gamma}\\
&\quad\quad\quad\quad\quad\quad \quad\quad\quad\quad \quad\quad\quad+(5-12\alpha)p\gamma (4-p^2)-p{\gamma}^2(4-p^2)\bigg),\\
\Delta_3(p,\gamma):&=4(1-\vert \gamma\vert ^2)(4-p^2)(1-\alpha)^2\bigg(-8(4-p^2)-\vert \gamma\vert ^2(4-p^2)+9(1-2\alpha)p^2\bar{\gamma}\bigg),\\
\phi(p,\gamma,\eta):&=36(1-\vert \gamma\vert ^2)(4-p^2)(1-\vert \eta\vert ^2)(1-\alpha)^2\bigg((4-p^2)\gamma-(1-2\alpha)p^2\bigg).
\end{align*}

Assume $x:=\vert \gamma\vert $, $y:=\vert \eta\vert $ and since $\vert \rho\vert \leq 1,$ the above expression reduces to
\begin{align*}
\vert H_{3}(1)\vert \leq \dfrac{1}{1152}\bigg(\vert \Delta_1(p,\gamma)\vert +\vert \Delta_2(p,\gamma)\vert y+\vert \Delta_3(p,\gamma)\vert y^2+\vert \phi(p,\gamma,\eta)\vert \bigg)\leq Z(p,x,y),
\end{align*}
where
\begin{equation}
Z(p,x,y)=\dfrac{1}{1152}\bigg(z_1(p,x)+z_2(p,x)y+z_3(p,x)y^2+z_4(p,x)(1-y^2)\bigg)\label{3 new}
\end{equation}
with
\begin{align*}
z_1(p,x):&=(1-\alpha)^2\bigg(\alpha(1-2\alpha)^2(3-2\alpha)p^6+(2-15\alpha+18\alpha^2)p^2x^2(4-p^2)^2\\
&\quad\quad\quad\quad\quad +p^2x^4(4-p^2)^2+(10-15\alpha)p^2x^3(4-p^2)^2+36\alpha x^3(4-p^2)^2\\
&\quad\quad\quad\quad\quad+(3-12\alpha^3+32\alpha^2-19\alpha)p^4x(4-p^2)+9(1-2\alpha)p^4x^3(4-p^2)\\
&\quad\quad\quad\quad\quad+(3-16\alpha^2+2\alpha)p^4x^2(4-p^2)+36(1-2\alpha)p^2x^2(4-p^2)\bigg),\\
z_2(p,x):&=4(1-x^2)(4-p^2)(1-\alpha)^2\bigg((8\alpha^2-10\alpha+3)p^3+9(1-2\alpha)p^3x\\
&\quad\quad\quad\quad\quad\quad\quad\quad\quad\quad\quad\quad\quad+(5-12\alpha)px(4-p^2)+px^2(4-p^2)\bigg),\\
z_3(p,x):&=4(1-x^2)(4-p^2)(1-\alpha)^2\bigg(8(4-p^2)+x^2(4-p^2)+9(1-2\alpha)p^2x\bigg),\\
z_4(p,x):&=36(1-x^2)(4-p^2)(1-\alpha)^2\bigg((4-p^2)x+(1-2\alpha)p^2\bigg).
\end{align*}

Note that for $\alpha\in[0,1/6]$, all the factors involving $\alpha$ in $\vert \Delta_1(p,\gamma)\vert $, $\vert \Delta_2(p,\gamma)\vert $, $\vert \Delta_3(p,\gamma)\vert $ and $\vert \phi(p,\gamma,\eta)\vert $, are positive as $1/6$ is the smallest positive root of the equation $2-15\alpha+18\alpha^2=0$. We maximise $Z(p,x,y)$ within the closed cuboid $Y: [0,2] \times [0,1] \times [0,1]$, by finding the maximum values in the interior of $Y$, in the interior of the six faces and on the twelve edges.\\

\noindent \underline{{\bf{Case I:}}}\\
We begin with every interior point of $Y$ assuming $(p,x,y)\in (0,2)\times (0,1)\times (0,1)$. We determine $\partial{Z}/\partial y$
 to examine the points of maxima in the interior of $Y$. Thus
\begin{align*}
\dfrac{\partial Z}{\partial y}&=\dfrac{(4 - p^2)(1 - x^2)(1-\alpha^2)}{288}  \bigg(8(8-9x+x^2)y- 2 p^2 (1-x)y(17-x-18\alpha)\\
&\quad \quad\quad \quad\quad\quad\quad\quad\quad\quad\quad\quad+p^3(3-x^2-10\alpha+8\alpha^2+x(4-6\alpha))\\
&\quad \quad\quad \quad\quad\quad\quad\quad\quad\quad\quad\quad+4xp(5+x-12\alpha)\bigg).
\end{align*}
Now, $\partial Z/\partial y=0$ gives
\begin{equation*}
y=y_0:=\dfrac{4xp(5+x-12\alpha)+p^3(3-x^2-10\alpha+8\alpha^2+x(4-6\alpha))}{2(1-x) (-4(8-x) + p^2 (17-x-18\alpha))}.
\end{equation*}
The existence of critical points require that $y_0\in(0,1)$ and can only exist when

\begin{align}
2p^2(1-x)(17-x-18\alpha)&>-p^3(-3+x^2+10\alpha-8\alpha^2-x(4-6\alpha))\nonumber\\
&\quad+4px(5+x-12\alpha)+8(1-x)(8-x).\label{6 h1}
\end{align}
We try finding the solution satisfying the inequality (\ref{6 h1}) for the critical points. The possible range for which $y_0\in (0,1)$, is $(0,(3+2\alpha)/9)\times (\tilde{p},2)$. Here
\begin{align*}
    \tilde{p}:&= \frac{2(17-18\alpha)}{\tilde{p_1}}-\frac{ 2^{4/3}(1-i\sqrt{3})(17-18\alpha)^2}{\tilde{p_1}(\tilde{p_2}+\sqrt{-256(17-18\alpha)^6+(\tilde{p_2})^2})^{1/3}}\\
    &\quad-\frac{(1+i\sqrt{3}) (\tilde{p_2}+\sqrt{-256(17-18\alpha)^6+(\tilde{p_2})^2})^{1/3}}{2^{4/3}\tilde{p_1}}
\end{align*}
where
\begin{align*}
    \tilde{p_1}:&=3(3-10\alpha+8\alpha^2);\\
    \tilde{p_2}:&=63056-146016\alpha+8640\alpha^2+183168\alpha^3-110592\alpha^4.
\end{align*}
Therefore, a calculation reveals that the maxima attained in the interior of $Y$ at each such $y_0\in (0,1)$ is always less than $4(1-\alpha)^2/9$ for $\alpha\in[0,1/6]$.
					
\noindent  \underline{{\bf{Case II:}}} \\
The interior of six faces of the cuboid $Y$, is now under consideration, for the further calculations.\\
\underline{On $p=0$}, $Z(p,x,y)$ turns into
\begin{equation}
s_1(x,y):=\dfrac{(1-\alpha)^2( (1 - x^2) ((8 + x^2) y^2 + 9 x (1 - y^2) )+ 9 x^3\alpha)}{18},\quad x,y\in (0,1).\label{6 9.4}
\end{equation}
Since
\begin{equation*}
\dfrac{\partial s_1}{\partial y}=\dfrac{(1 - x^2)(x+1)(8-x)(1-\alpha)^2y}{9}\neq 0,\quad x,y\in (0,1).
\end{equation*}
Thus, $s_1$ has no critical point in $(0,1)\times(0,1)$.
						
\noindent \underline{On $p=2$}, $Z(p,x,y)$ reduces to
\begin{equation}
Z(2,x,y):=\dfrac{\alpha (1-\alpha)^2 (1-2\alpha)^2(3-2\alpha)}{18},\quad x,y\in (0,1).\label{6 9.3}
\end{equation}

\noindent \underline{On $x=0$}, $Z(p,x,y)$ becomes
\begin{align}
s_2(p,y):&=\dfrac{(1-\alpha)^2}{1152}\bigg(\alpha (3-2\alpha)(1-2\alpha)^2p^6 + 36(1-2\alpha)p^2(4-p^2)(1-y^2)\nonumber\\
&\quad\quad\quad\quad\quad\quad+32 (4- p^2)^2 y^2+4(3-10\alpha+8\alpha^2)p^3y(4-p^2)\bigg) \label{6 9.1}
\end{align}
with $p\in (0,2)$ and $y\in (0,1)$. On solving $\partial s_2/\partial p$ and $\partial s_2/\partial y$, to find the points of maxima. After resolving $\partial s_2/\partial y=0,$ we get
\begin{equation}
y=\dfrac{p^3(3-10\alpha+8\alpha^2)}{2(17p^2-32-18p^2\alpha)}(=:y_0).\label{6 y}
\end{equation}
Upon calculations, we observe that to have $y_0\in (0,1)$ for the given range of $y$, $p=:p_0>\approx A(\alpha)$ (see Fig. \ref{picture2}) is needed with $\alpha\in[0,\beta_0)$. This $\beta_0\in [0,1)$ is the smallest positive root of $-3+10\alpha-8\alpha^2=0$ and no such $p\in (0,2)$ exists when $\alpha\in (\beta_0,1)$. It is to be noted that the expression of $A(\alpha)$ is complex but the coefficient of its imaginary part for $\alpha\in[0,1/6]$ is highly negative (of order $10^{-15})$, which can be neglected and $A(\alpha)$ can be treated as a real number.
Here,
\begin{align*}
    A(\alpha)&:=\frac{1}{3(-3+10\alpha-8\alpha^2)}\bigg(2(18\alpha-17)-\frac{2^{4/3}(1-i\sqrt{3})(18\alpha-17)^2}{B}-\frac{(1+i\sqrt{3})B}{2^{4/3}}\bigg),
\end{align*}
with
\begin{equation*}
 B:=\bigg(C+\sqrt{-256(18\alpha-17)^6+C^2}\bigg)^{1/3}
 \end{equation*}
 and
 \begin{equation*}
  C:=-63056+146016\alpha-8640\alpha^2-183168\alpha^3+110592\alpha^4.
\end{equation*}
Based on computations, $\partial s_2/\partial p=0$ gives
\begin{align}
0&=16p(9-18\alpha-y^2(25-18\alpha))-2p^2y(3-10\alpha+8\alpha^2)(5p^2-12)\nonumber\\
&\quad +3\alpha (1-2\alpha)^2 (3-2\alpha)p^5-8p^3(9-18\alpha-y^2(17-18\alpha)).\label{6 9}
\end{align}
After substituting equation (\ref{6 y}) into equation (\ref{6 9}), we have
\begin{align}
0&=p\bigg(49152  (1-2\alpha)-3072 p^2 (25 -68\alpha+36\alpha^2)-p^8(1-2\alpha)^2(153-1437\alpha\nonumber\\
&\quad+3118\alpha^2-2484\alpha^3+648\alpha^4)+16p^4(2427-7890\alpha+6020\alpha^2+616\alpha^3-1024\alpha^4)\nonumber\\
&\quad\quad-128p^6(48-153\alpha+20\alpha^2+340\alpha^3-352\alpha^4+96\alpha^5)\bigg).\label{6 40}
\end{align}
A numerical calculation suggests that the solution of (\ref{6 40}) in the interval $(0,2)$ is $p\approx B(\alpha)$ whenever $\alpha\in[0,\alpha_2)$, where $\alpha_2\in[0,1)$ is the smallest positive root of $153-1437\alpha+3118\alpha^2-2484\alpha^3+648\alpha^4=0$, otherwise no such $p\in (0,2)$ exists, see Fig. \ref{picture2}. Thus, $s_2$ does not have any critical point in $(0,2)\times(0,1)$.\\
Here
\begin{align*}
    B(\alpha):&=\frac{1}{\sqrt{2}}\bigg[\bigg\{ \frac{1}{F}\bigg( -3072+3648\alpha+6016\alpha^2-9728\alpha^3+3072\alpha^4\\
    &\quad\quad\quad-\frac{4F}{\sqrt{3}}\bigg\{\frac{1}{E^2}\bigg(\frac{768J^2}{(1-2\alpha)^2}+\frac{2GE}{(1-2\alpha)}+\frac{HE}{I}+\frac{IE}{(1-2\alpha)^2}\bigg)\bigg\}^{1/2}\\
    &\quad\quad\quad +\frac{4F}{\sqrt{3}}\bigg\{\frac{-1}{E^3}\bigg(\frac{-1536J^2E}{(1-2\alpha)^2} -\frac{4GE^2}{(1-2\alpha)}+\frac{HE^2}{I}+\frac{IE^2}{(1-2\alpha)^2} \\
    &\quad\quad\quad -\frac{K}{(1-2\alpha)^3\bigg\{\frac{1}{E^2}\bigg( \frac{768J^2}{(1-2\alpha)^2}+\frac{2GE}{(1-2\alpha)}+\frac{HE}{I}+\frac{IE}{(1-2\alpha)^2}\bigg)\bigg\}^{1/2}} \bigg)
    \bigg\}^{1/2}\bigg) \bigg\}^{1/2}  \bigg],
    \end{align*}
    with
    \begin{align*}
    E&:=153-1437\alpha+3118\alpha^2-2484\alpha^3+648\alpha^4;\\
    F&:=(1-2\alpha)E;\\
    G&:=2427-3036\alpha-52\alpha^2+512\alpha^3;\\
    H&:=8217-173160\alpha+1260312\alpha^2-2415264\alpha^3+2091664\alpha^4-1048576\alpha^5+262144\alpha^6;
    \end{align*}
   \begin{align*}
    I&:=\bigg(127065213-1886889978\alpha+12579196752\alpha^2-49871499552\alpha^3+132494582880\alpha^4\\
    &\quad\quad -253944918720\alpha^5+368411062528\alpha^6-410152327680\alpha^7+340236674304\alpha^8\\
    &\quad\quad -196757891584\alpha^9+71861010432\alpha^{10}-14168358912\alpha^{11}+1073741824\alpha^{12}\\
    &\quad\quad +288\sqrt{6}\bigg((1-2\alpha)^8(3-4\alpha)^2(3604621581 - 39763739565\alpha+202125486510\alpha^2\\
    &\quad\quad -633657349224\alpha^3+1436021769744 \alpha^4-2516421142080 \alpha^5+34829931648\alpha^6\\
     &\quad\quad-3872882513280\alpha^7+3466438619648\alpha^8-2402201403136\alpha^9+1198713174528\alpha^{10}\\
      &\quad\quad-394099818496\alpha^{11}+75581358080\alpha^{12}-6442450944\alpha^{13})\bigg)^{1/2}\bigg)^{1/3};\\
      J&:=-48+57\alpha+94\alpha^2-152\alpha^3+48\alpha^4;
      \end{align*}
      and
      \begin{align*}
      K&:=288\sqrt{3}\bigg(15963705 - 129546873\alpha+510658314\alpha^2-1308834456 \alpha^3+2415583204 \alpha^4\\
      &\quad\quad\quad\quad\quad -3321041560 \alpha^5+3420107120 \alpha^6-2619528992\alpha^7+1464766656\alpha^8\\
      &\quad\quad\quad\quad\quad -575732096\alpha^9+147709696\alpha^{10}-21284352 \alpha^{11}+1179648\alpha^{12}\bigg).
\end{align*}

\begin{figure}
    \centering
   \includegraphics[width=\textwidth] {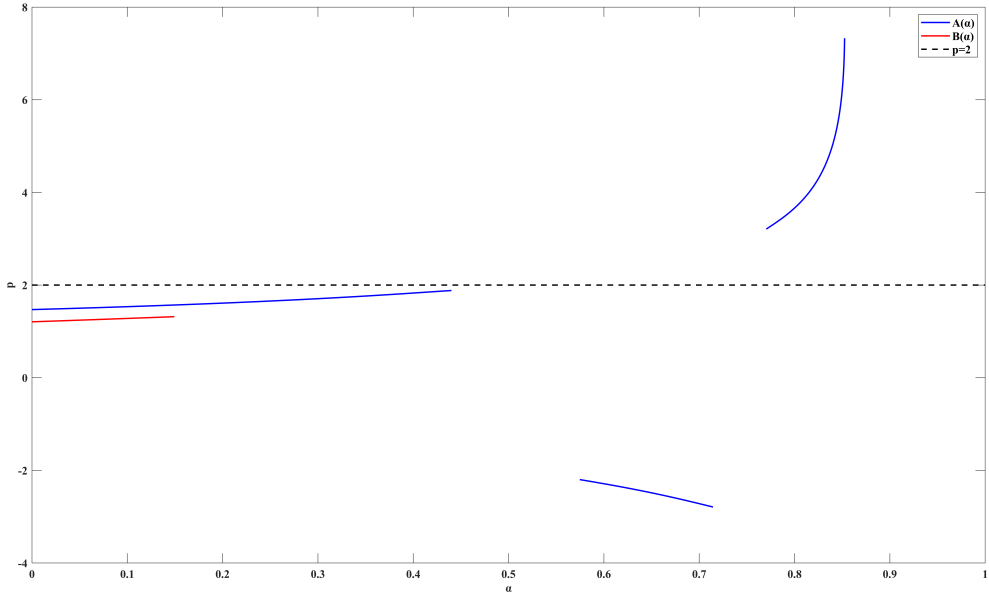}
    \caption{Graphical representation of $p$ versus $\alpha$. Here, $B(\alpha)$ (Red) and $A(\alpha)$ (blue) do not intersect for any choice of $\alpha$. Dashed black line represents $p=2$.}
    \label{picture2}
\end{figure}

\noindent \underline{On $x=1$}, $Z(p,x,y)$ reduces into
\begin{align}
s_3(p,y):&=\dfrac{(1-\alpha)^2}{576}\bigg(288\alpha+16p^2(11-33\alpha+9\alpha^2)-8p^4(5-13\alpha+5\alpha^2+3\alpha^3)\nonumber\\
    &\quad\quad\quad\quad\quad\quad-p^6(1-4\alpha+6\alpha^2-16\alpha^3+4\alpha^4)\bigg), \quad p\in (0,2).\label{6 9.2}
\end{align}
While computing $\partial s_3/\partial p=0$, $p=:p_0\approx 2L(\alpha)$ for $\alpha\in [0,\alpha_0)\cup(\alpha_0,\alpha_1)$, comes out to be the critical point, where $\alpha_0\in[0,1)$ is the smallest positive root of $1-4\alpha+6\alpha^2-16\alpha^3+4\alpha^4=0$ and $\alpha_1(\approx 0.370803927)\in[0,1)$ (see Fig. \ref{picture1}) is the largest value so that $p\in (0,2)$ otherwise no such real $p\in (0,2)$ exists beyond this $\alpha_1$.
Here
\begin{equation}
\begin{aligned}
 \left.
\begin{array}{cc}
     & L(\alpha):=\sqrt{\dfrac{-10+26\alpha-10\alpha^2-6\alpha^3+M}{3N}};\\
& M:=\sqrt{133-751\alpha+1497\alpha^2-1630\alpha^3+1666\alpha^4-708\alpha^5+144\alpha^6};\\
& N:=1-4\alpha+6\alpha^2-16\alpha^3+4\alpha^4.\label{6 supportpalpha}
\end{array}
\right\}
\end{aligned}
\end{equation}
Undergoing simple calculations, $s_3$ achieves its maximum value, approximately equals $P(\alpha)$, 
$[0,\alpha_0)\cup(\alpha_0,\alpha_1)$ at $p_0$.
Here
\begin{equation}
\begin{aligned}
 \left.
\begin{array}{cc}
   & P(\alpha):=\dfrac{(1-\alpha)^2}{486}\bigg(243\alpha-\frac{18(11-33\alpha+9\alpha^2)(10-26\alpha+10\alpha^2+6\alpha^3-M)}{N}\\
    &\quad\quad\quad\quad\quad\quad\quad\quad-\frac{12(5-13\alpha+5\alpha^2+3\alpha^3)(-10+26\alpha-10\alpha^2-6\alpha^3+M)^2}{N^2}\\
    &\quad\quad\quad\quad-\frac{2(-10+26\alpha-10\alpha^2-6\alpha^3+M)^3}{N^2} \bigg).
    \end{array}
\right\}
\end{aligned}\label{6 palpha}
\end{equation}

\begin{figure}
    \centering
   \includegraphics[width=\textwidth]{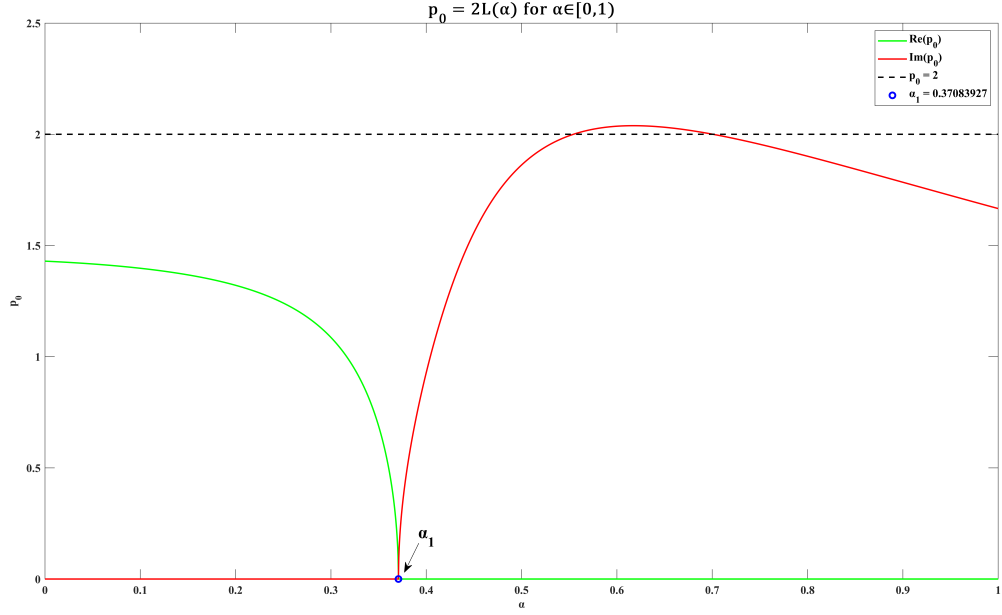}
    \caption{Graphical representation of $p_0$ versus $\alpha$. Here, $\RE(p_0)$ (green) and $\IM(p_0)$ (red) represent the value of $p_0$ at different $\alpha$, where $\alpha_1$ (blue circle) is the point at which $p_0$ transforms from completely real to imaginary. Dashed black line represents $p_0=2$. }
    \label{picture1}
\end{figure}

\noindent \underline{On $y=0$}, $Z(p,x,y)$ can be seen as
\begin{align*}
s_4(p,x):&=\dfrac{(1-\alpha)^2}{1152}\bigg(576\bigg(x-x^3(1-\alpha)\bigg)+16p^2\bigg(9-18x+x^4+x^3(28-33\alpha)\\
&\quad \quad\quad\quad\quad-18\alpha+x^2(2-15\alpha+18\alpha^2)\bigg) -4p^4\bigg(9+2x^4+x^3(20-21\alpha) \\
&\quad \quad\quad\quad\quad-18\alpha+x^2(1-32\alpha+52\alpha^2)+x(-12+19\alpha-32\alpha^2+12\alpha^3)\bigg)\\
&\quad\quad\quad\quad\quad+p^6\bigg (x^4+\alpha(1-2\alpha)^2(3-2\alpha)+x^3(1+3\alpha)\\
&\quad \quad\quad\quad\quad-x^2(1+17\alpha-34\alpha^2)+x(-3+19\alpha-32\alpha^2+12\alpha^3)\bigg)\bigg).
\end{align*}

Furthermore, through some calculations, such as
\begin{align*}
\dfrac{\partial s_4}{\partial x}&=\dfrac{(1-\alpha)^2}{1152}\bigg(576\bigg(1-3x^2(1-\alpha)\bigg)-16p^2\bigg(18-4x^3-3x^2(28-33\alpha)\\
&\quad\quad \quad\quad \quad \quad-2x(2-15\alpha+18\alpha^2)\bigg)+p^6\bigg(4x^3+3x^2(1+3\alpha)-2x(1+17\alpha\\
&\quad\quad \quad\quad \quad \quad-34\alpha^2)-3+19\alpha-32\alpha^2+12\alpha^3\bigg)-4p^4\bigg(8x^3+3x^2(20-21\alpha)\\
&\quad\quad \quad\quad \quad \quad+2x(1-32\alpha+52\alpha^2)-12+19\alpha-32\alpha^2+12\alpha^3\bigg)\bigg)
\end{align*}

and \begin{align*}
\dfrac{\partial s_4}{\partial p}&=\dfrac{(1-\alpha)^2}{1152}\bigg(32 p\bigg (9 - 18 x + x^4 + x^3 (28 - 33\alpha)-18\alpha+x^2(2-15\alpha+18\alpha^2)\bigg) \\
&\quad\quad \quad\quad\quad\quad -16 p^3 \bigg(9  + 2 x^4 + x^3 (20 - 21 \alpha)-18\alpha+x^2(1-32\alpha+52\alpha^2)\\
&\quad\quad \quad\quad\quad\quad+x(-12+19\alpha-32\alpha^2+12\alpha^3)\bigg)+6p^5\bigg(x^4+\alpha(1-2\alpha)^2(3-2\alpha)\\
&\quad\quad \quad\quad\quad\quad+x^3(1+3\alpha)-x^2(1+17\alpha-34\alpha^2)+x(-3+19\alpha-32\alpha^2+12\alpha^3)\bigg)\bigg),
\end{align*}
indicates that there does not exist any common solution for the system of equations $\partial s_4/\partial x=0$ and $\partial s_4/\partial p=0$, thus, $s_4$ has no critical points in $(0,2)\times(0,1)$.\\

\noindent \underline{On $y=1$}, $Z(p,x,y)$ reduces to
\begin{align*}
s_5(p,x):&=\dfrac{(1-\alpha)^2}{1152}\bigg(64px(1-x^2)(5+x-12\alpha)+64(8-7x^2-x^4+9\alpha)\\
&\quad\quad \quad\quad\quad+16p^3(1-x^2)(3-x-2x^2-10\alpha+6x\alpha+8\alpha^2)-2p^6(1-4\alpha\\
&\quad\quad \quad\quad\quad-16\alpha^3+4\alpha^4)+16p^2\bigg(6+14x^2+2x^4+x(9-18\alpha)-66\alpha\\
&\quad\quad \quad\quad\quad +18\alpha^2-9x^3(1-2\alpha)\bigg)+4p^5(1-x^2)\bigg(x^2-3+10\alpha-8\alpha^2\\
&\quad\quad \quad\quad\quad-x(4-6\alpha)\bigg)-4p^4\bigg(7x^2+x^4+x(9-18\alpha)-9x^3(1-2\alpha)\\
&\quad\quad \quad\quad\quad+4(3-13\alpha+5\alpha^2+3\alpha^3)\bigg)\bigg).
\end{align*}
We note that the equations $\partial s_5/\partial x=0$ and $\partial s_5/\partial p=0$ possess no common solution in $(0,2)\times (0,1).$\\

\noindent \underline{{\bf{Case III:}}}
Now, we determine the maximum values that $Z(p,x,y)$ may obtain on the edges of the cuboid $Y$.\\
From equation (\ref{6 9.1}), we have
\begin{equation*}
Z(p,0,0)=r_1(p):=\frac{p^2(1-\alpha)^2(1-2\alpha)(144-36p^2+p^4\alpha(3-8\alpha+4\alpha^2)}{1152}.
\end{equation*}
Here, we consider the following three subcases for different choices of $\alpha$.
\begin{enumerate}
 \item For $\alpha=0$, $r_1(p)$ reduces to $p^2(4-p^2)/32$ and $r_1'(p)=0$ for $p=0$, the point of minima and $p=\sqrt{2}$, the point of maxima. Therefore
    \begin{equation*}
Z(p,0,0)\leq \frac{1}{8}, \quad p\in [0,2].
\end{equation*}
\item For $\alpha=1/2$, $r_1(p)=0$.\\
 \item For $\alpha=(0,1/2)\cup (1/2,1),$
$r'_1(p)=p(1-\alpha)^2(1-2\alpha)(48-24p^2+p^4\alpha(3-8\alpha+4\alpha^2))=0$ for $p=0$ and $p=2\bigg(((3-R(\alpha))/(3\alpha-8\alpha^2+4\alpha^3)\bigg)^{1/2}$ as the points of minima and maxima respectively. So,
\begin{equation*}
    Z(p,0,0)\leq R_0(\alpha):=\frac{(1-\alpha)^2(3-R(\alpha))(-3+6\alpha-16\alpha^2+8\alpha^3+R(\alpha))}{6(3-2\alpha)^2\alpha^2(1-2\alpha)},
\end{equation*}
with $R(\alpha):=\sqrt{3(3-3\alpha+8\alpha^2-4\alpha^3)}$.
\end{enumerate}
Now, equation (\ref{6 9.1}) at $y=1,$ implies that $Z(p,0,1)=r_2(p):=(1-\alpha)^2(32(4-p^2)^2+\alpha (1-2\alpha)^2(3-2\alpha)p^6+4p^3(4-p^2)(3-10\alpha+8\alpha^2))/1152.$ Note that $r_2'(p)$ is a decreasing function in $[0,2]$ and hence $p=0$ becomes the point of maxima. Thus
\begin{equation*}
Z(p,0,1)\leq \dfrac{4(1-\alpha)^2}{9}, \quad p\in [0,2].
\end{equation*}
Through calculations, equation (\ref{6 9.1}) shows that $Z(0,0,y)$ attains its maximum value at $y=1,$ which implies that
\begin{equation*}
Z(0,0,y)\leq \dfrac{4(1-\alpha)^2}{9}, \quad y\in [0,1].
\end{equation*}
Since, the equation (\ref{6 9.2}) is free from $y$, we have \begin{align*}
Z(p,1,1)=Z(p,1,0)=r_3(p):&=\frac{(1-\alpha)^2}{576}\bigg(288\alpha+16p^2(11-33\alpha+9\alpha^2)\\
&\quad\quad\quad\quad\quad\quad-8p^4(5-13\alpha+5\alpha^2+3\alpha^3)\\
&\quad\quad\quad\quad\quad\quad-p^6(1-4\alpha+6\alpha^2-16\alpha^3+4\alpha^4)\bigg).
\end{align*}
Now, $r_3'(p)=32p(11-33\alpha+9\alpha^2)-32p^3(5-13\alpha+5\alpha^2+3\alpha^3)-6p^5(1-4\alpha+6\alpha^2-16\alpha^3+4\alpha^4)=0$ when $p=\delta_1:=0$ and $p=\delta_2:=2L(\alpha)$ for $\alpha\in[0,\alpha_0)\cup(\alpha_0,\alpha_1)$,
as the points of minima and maxima respectively, in the interval $[0,2]$. The justification of $P(\alpha)$, $\alpha_0$ and $\alpha_1$ are provided above through equation \eqref{6 supportpalpha} and \eqref{6 palpha}. Thus, from equation (\ref{6 9.2}),
\begin{equation*}
Z(p,1,1)=Z(p,1,0)\leq P(\alpha),\quad p\in [0,2]\quad\text{and}\quad \alpha\in [0,\alpha_0)\cup(\alpha_0,\alpha_1).
\end{equation*}
Consider equation (\ref{6 9.2}) at $p=0$, we get
\begin{equation*}
Z(0,1,y)=\frac{\alpha (1-\alpha)^2}{2}.
\end{equation*}

Equation (\ref{6 9.3}) indicates that
\begin{equation*}
Z(2,1,y)=Z(2,0,y)=Z(2,x,0)=Z(2,x,1)=\dfrac{\alpha (1-2\alpha)^2(1-\alpha)^2(3-2\alpha)}{18}.
\end{equation*}
Using equation (\ref{6 9.4}), $Z(0,x,1)=r_4(x):=(1-\alpha)^2 (8-7x^2-x^4+9x^3\alpha)/18.$ Upon calculations, we see that $r_4$ is a decreasing function of $x$ in $[0,1]$ and therefore $x=0$ is the point of maxima. Hence
\begin{equation*}
Z(0,x,1)\leq \dfrac{4(1-\alpha)^2}{9},\quad x\in [0,1].
\end{equation*}
On again using equation (\ref{6 9.4}), $Z(0,x,0)=r_5(x):=x(1-(1-\alpha)x^2)(1-\alpha)^2/2.$ Moreover, $r_5'(x)=0$ when $x=\delta_3:=1/\sqrt{3(1-\alpha)}.$ Observe that $r_5(x)$ increases in $[0,\delta_3)$ and decreases in $(\delta_3,1].$ Hence,
\begin{equation*}
Z(0,x,0)\leq \frac{(1-\alpha)^2}{3\sqrt{3(1-\alpha)}},\quad x\in [0,1].
\end{equation*}

We also provide a graphical representation of six upper-bounds (u.b) of $H_{3}(1)$ in Fig. \ref{picture8}. Given all the cases, the sharp inequality $\vert H_{3}(1)\vert \leq 4(1-\alpha)^2/9$, holds for every $\alpha\in[0,1/6]\cup\{1/2\}$.

\begin{figure}
    \centering
   \includegraphics[width=\textwidth]{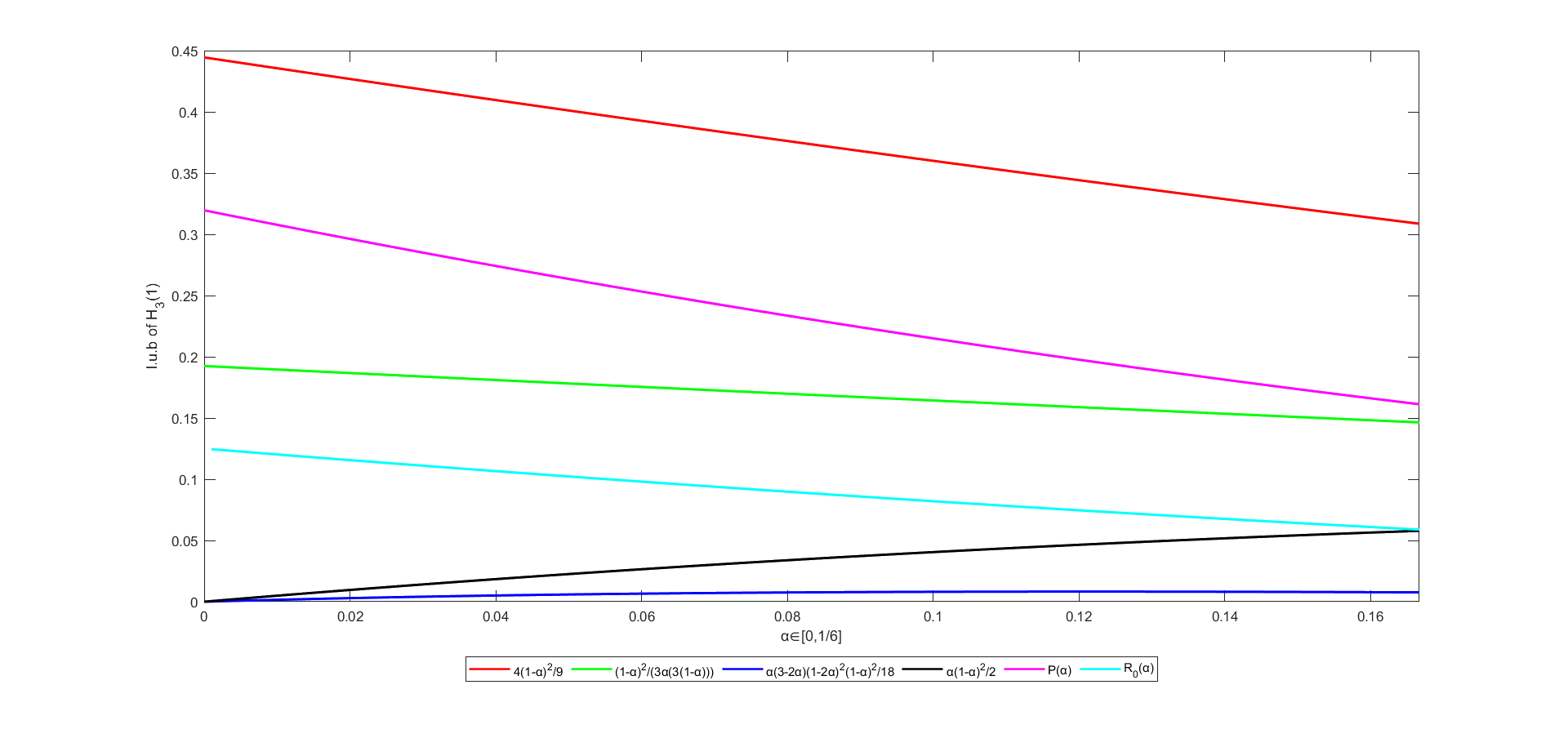}
    \caption{Graph of six upper-bounds (u.b) versus $\alpha$. The upper-bounds (u.b) of $H_{3}(1)$ are $4(1-\alpha)^2/9$ (red), $(1-\alpha)^2/(3\sqrt{3(1-\alpha)})$ (green), $\alpha (1-2\alpha)^2(1-\alpha)^2(3-2\alpha)/18$ (blue), $\alpha(1-2\alpha)^2/2 $ (black), $P(\alpha)$ (pink) and $R_{0}(\alpha)$ (cyan) for $\alpha\in[0,1/6]$. }
    \label{picture8}
\end{figure}
Let the function $f_0\in \mathcal{S}^{*}(\alpha):\mathbb{D}\rightarrow \mathbb{C}$, be defined as
\begin{equation*}
f_0(z)=z\exp\bigg(\int_{0}^{z}\dfrac{\frac{1+(1-2\alpha)t^3}{1-t^3}-1}{t}dt\bigg)=z+\dfrac{2(1-\alpha)}{3}z^4+\dfrac{(1-\alpha)(5-2\alpha)}{9}z^7+\cdots,\label{6 extremal}
\end{equation*}
with $f_0(0)=0$ and $f_0'(0)=1$, plays the role of an extremal function for the inequality presented in equation \eqref{6 9.5} with $a_2=a_3=a_5=0$ and $a_4=2(1-\alpha)/3$. 			
\end{proof}

Now, we provide remarks which incorporate the bound of $\vert H_{3}(1)\vert $ for the class $\mathcal{S}^{*}$ and  $\mathcal{S}^{*}(1/2)$, which are subclasses of $\mathcal{S}^{*}(\alpha)$, given as follows:
\begin{remark}
   On substituting $\alpha=0$ in Theorem \ref{6 sharph31},  $\mathcal{S}^{*}(0)=\mathcal{S}^{*}$ and from equation (\ref{6 9.5}), we get $\vert H_{3}(1)\vert \leq 4/9$. This bound is sharp and coincides with that of Kowalczyk et al.\cite{4/9} and Banga and Kumar \cite{banga}.
\end{remark}

\begin{remark}
   On substituting $\alpha=1/2$ in Theorem \ref{6 sharph31},  $\mathcal{S}^{*}(1/2)$ and from equation (\ref{6 9.5}), we get $\vert H_{3}(1)\vert \leq 1/9$. This bound is sharp and coincides with that of Rath et al.\cite{rath}.
\end{remark}

For some already known sharp bounds of $H_{3}(1)$, regarding various choices of $\varphi(z)$, See Table \ref{table1}. We note that the same bound is not available for $\varphi(z):=1+z-z^3/3$. Hence, as an application of Theorem \ref{6 sharph31}, we provide a better bound of $\vert H_{3}(1)\vert $ for functions belonging to the class, $\mathcal{S}_{Ne}^{*}:=\mathcal{S}^{*}(1+z-z^3/3)$.

\begin{corollary}
If $f\in \mathcal{S}_{Ne}^{*}$. Then $\vert H_{3}(1)\vert \leq 32/81\approx 0.395062$.
\end{corollary}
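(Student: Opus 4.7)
The plan is to leverage the inclusion $\mathcal{S}^{*}(\varphi) \subset \mathcal{S}^{*}(\alpha)$ pointed out in the introduction together with Theorem \ref{6 sharph31}. Setting $\varphi(z) = 1 + z - z^{3}/3$, I would first determine the largest admissible $\alpha$, namely
\[
\alpha^{*} := \inf_{z \in \mathbb{D}} \operatorname{Re}\varphi(z),
\]
so that $\mathcal{S}_{Ne}^{*} \subset \mathcal{S}^{*}(\alpha^{*})$. Since $\operatorname{Re}\varphi$ is harmonic, the minimum principle pushes the infimum to the boundary, so with $z = e^{i\theta}$ I want to minimise $g(\theta) := 1 + \cos\theta - \tfrac{1}{3}\cos 3\theta$. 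The derivative $g'(\theta) = -\sin\theta + \sin 3\theta = 2\sin\theta(1 - 2\sin^{2}\theta)$ vanishes at $\theta \in \{0,\pi,\pm\pi/4,\pm 3\pi/4\}$, and a direct evaluation shows the minimum is attained at $\theta = 3\pi/4$, yielding $\alpha^{*} = 1 - \tfrac{2\sqrt{2}}{3}$.

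The next step is to check that $\alpha^{*} \approx 0.0572$ lies in the admissible range $[0,\alpha_{0}) \cup (\alpha_{0},\alpha_{1})$ of Theorem \ref{6 sharph31}. A sign analysis of $1 - 4\alpha + 6\alpha^{2} - 16\alpha^{3} + 4\alpha^{4}$ at $\alpha = 0.25$ (positive) and $\alpha = 0.3$ (negative) places $\alpha_{0}$ near $0.285$, and since $\alpha_{1} \approx 0.370803$, the inclusion $\alpha^{*} < \alpha_{0} < \alpha_{1}$ holds comfortably.

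Invoking Theorem \ref{6 sharph31} then gives
\[
|H_{3,1}(f)| \leq \frac{4(1 - \alpha^{*})^{2}}{9} = \frac{4}{9}\left(\frac{2\sqrt{2}}{3}\right)^{2} = \frac{32}{81},
\]
which is the desired bound. The only step requiring genuine care is pinning down $\alpha^{*}$: the cubic term $-z^{3}/3$ breaks any simple unimodal behaviour of $g$, so one must compare all five interior critical points on $[0,2\pi)$ rather than relying on monotonicity. Everything else is an immediate appeal to Theorem \ref{6 sharph31}.
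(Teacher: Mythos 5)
Your argument is correct and takes essentially the same route as the paper: identify $\alpha^{*}=\inf_{z\in\mathbb{D}}\RE(1+z-z^{3}/3)=1-\tfrac{2\sqrt{2}}{3}$, conclude $\mathcal{S}^{*}_{Ne}\subset\mathcal{S}^{*}(\alpha^{*})$, check that $\alpha^{*}\approx 0.0572$ lies in $[0,\alpha_{0})\cup(\alpha_{0},\alpha_{1})$, and apply Theorem \ref{6 sharph31} to get $4(1-\alpha^{*})^{2}/9=32/81$. The only difference is cosmetic: the paper quotes $\min_{|z|=r}\RE\varphi(z)$ from Wani and Swaminathan and lets $r\to 1$, whereas you compute the boundary minimum of $1+\cos\theta-\tfrac{1}{3}\cos 3\theta$ directly, which yields the same $\alpha^{*}$.
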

\begin{proof}
From \cite{wani}, we have
\begin{equation*}
\min_{\vert z\vert =r}\RE(\varphi(z))=\begin{cases}1-r+\frac{1}{3}r^3,& r\leq 1/\sqrt{3}\\
1-\frac{1}{3}(1+r^2)^{3/2},& r\geq 1/\sqrt{3}.
\end{cases}
\end{equation*}
We note that $\alpha=\min_{\vert z\vert =r}\RE(\varphi(z))=1-2\sqrt{2}/3$ as $r$ tends to $1$. Now, substitution of $\alpha=1-2\sqrt{2}/3\approx 0.057191\in [0,1/6]$ in equation \eqref{6 sharph31} implies that $\vert H_{3}(1)\vert \leq 32/81\approx 0.395062$.
\end{proof}				
				
\noindent \underline{{\bf{Open Problem:}}}\\
We have attempted to provide the sharp bound of $H_{3}(1)$ for functions, $f\in\mathcal{S}^{*}(\alpha)$ for $\alpha\in[0,1/6]\cup\{1/2\}$ in Theorem \ref{6 sharph31}. Further, this result is still open for the remaining range of $\alpha$ in $[0,1)$.


\end{document}